\documentclass[a4paper,12pt, reqno]{amsart}

\usepackage{amsfonts, amsthm, amsmath, amssymb}

\usepackage{a4wide}

 \newtheorem{theorem}{Theorem}
 \newtheorem*{lemma}{Lemma}
 \newtheorem*{corollary}{Corollary}
 \theoremstyle{definition}
\newtheorem*{TA}{Acknowledgements}
 \newcommand{\mc}{\mathcal}

 \newcommand{\D}{\mc{D}}

 \newcommand{\mbb}{\mathbb}
 
 \newcommand{\N}{\mbb{N}}
 
 \newcommand{\Z}{\mbb{Z}}


\RequirePackage{mathrsfs} \let\mathcal\mathscr


\renewcommand{\phi}{\varphi}
\renewcommand{\rho}{\varrho}

\newcommand{\ZZ}{\mathbb{Z}}

\renewcommand{\leq}{\leqslant}
\renewcommand{\le}{\leqslant}
\renewcommand{\geq}{\geqslant}
\renewcommand{\ge}{\geqslant}

\newcommand{\m}{\mathbf{m}}

\newcommand{\ve}{\varepsilon}

\DeclareMathOperator{\Mod}{mod}
\renewcommand{\bmod}[1]{\,(\Mod{#1})}
\renewcommand{\=}{\equiv}

\begin{document}

\title[Mean value theorem and multiplicative inverses]{Incomplete Kloosterman sums and\\ multiplicative inverses in short intervals}


\author{T. D. Browning}

\author{A. Haynes}

\address{School of Mathematics, University of Bristol, Bristol, BS8 1TW, UK}

\email{t.d.browning@bristol.ac.uk}
\email{alan.haynes@bristol.ac.uk}

\subjclass[2010]{11L05 (11N25)}

\begin{abstract}
We investigate the solubility of the congruence $xy\=1\bmod p$, where $p$ is a prime and $x, y$ are restricted to lie in suitable  short intervals.
Our work relies on a mean value theorem for incomplete Kloosterman sums.
\end{abstract}

\maketitle

\section{Introduction}
Let $p$ be a prime, and let $I_1, I_2\subseteq (0,p)$ be subintervals. This paper is motivated by determining conditions on $I_1,I_2$ under which we can ensure the solubility of the  congruence
\[
xy\=1\bmod p,\quad (x,y)\in I_1\times I_2.
\]
From a heuristic point of view we would expect this congruence to have a solution whenever $|I_1|,|I_2|\gg p^{1/2}$. However,  as highlighted by Heath-Brown \cite{H-B2000}, the best result to date requires that
$|I_1|\cdot|I_2|\gg p^{3/2}\log^2 p$. The  proof requires one to  estimate incomplete Kloosterman sums
\[
S(n,H)=\sum_{\substack{m=n+1\\m\not\= 0\bmod p}}^{n+H}e\left(\frac{\ell\overline{m}}{p}\right),
\]
for $\ell\in (\ZZ/\ell\ZZ)^*$, for which the Weil bound yields
\begin{equation}\label{eq:weil}
|S(n,H)|\le 2(1+\log p) p^{1/2}.
\end{equation}
It has been conjectured by Hooley \cite{hooley} that
$S(n,H)\ll H^{1/2}q^\ve,$ for any $\ve>0$,
which would enable one to handle
intervals with $|I_1|,|I_2|\gg p^{2/3+\ve}.$
However such a bound appears to remain a distant prospect.

A different approach to this problem involves  considering a sequence of pairs of intervals $I_1^{(j)}, I_2^{(j)}$, for $1\le j\le J$, and to ask whether  there is a value of $j$ for which there is a solution to the congruence
\begin{equation}\label{eqn:intro.1}
xy\=1\bmod p,\quad (x,y)\in I_1^{(j)}\times  I_2^{(j)}.
\end{equation}
There are some obvious degenerate cases here. For example, if we suppose that $I_1^{(j)}=I_2^{(j)}$ for all $j$, and that these run over all intervals of a given length $H$, then we are merely asking whether there is positive integer $h\le H$ with the property that the congruence $x(x+h)\=1\bmod p$ has a solution $x\in\Z$. This
is equivalent to deciding  whether  the set
$\{h^2+4:1\le h\le H\}$
contains a quadratic residue modulo $p$. When $H=2$, therefore, it is clear that this problem has a solution for all primes $p=\pm 1\bmod 8$. We avoid considerations of this sort by assuming that at least one of our sequences of intervals is pairwise disjoint. The following is our main result.

\begin{theorem}\label{thm:inverses1}
Let $H,K>0$ and let $I_1^{(j)}, I_2^{(j)}\subseteq (0,p)$ be subintervals, for
$1\le j\le J$, such that
\[|I_1^{(j)}|=H\quad\text{and}\quad |I_2^{(j)}|=K\]
and
\[I_1^{(j)}\cap I_1^{(k)}=\emptyset\quad\text{for all}\quad j\not= k.\]
Then
there exists $j\in \{1,\ldots,J\}$ for which \eqref{eqn:intro.1} has a solution
if
\[
J\gg \frac{p^{3}\log^4 p}{H^2K^2}.
\]
\end{theorem}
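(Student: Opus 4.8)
The plan is to count, for each $j$, the number $N_j$ of integer pairs $(x,y)\in I_1^{(j)}\times I_2^{(j)}$ with $xy\=1\bmod p$, and to show that the hypothesis on $J$ forces $\sum_{j=1}^J N_j>0$, whence some individual $N_j$ is positive. Since every integer in $(0,p)$ is coprime to $p$ and represents a distinct residue class, $N_j$ simply counts those $x\in I_1^{(j)}$ whose inverse $\overline x$ lies in $I_2^{(j)}\bmod p$. Detecting this last condition by expanding the indicator function of $I_2^{(j)}$ into additive characters, I would write
\[
N_j=\frac1p\sum_{a=0}^{p-1}c_a^{(j)}\,S(n_j,H),\qquad S(n_j,H)=\sum_{x\in I_1^{(j)}}e\!\left(\frac{a\overline x}{p}\right),
\]
where $c_a^{(j)}=\sum_{y\in I_2^{(j)}}e(-ay/p)$ and $n_j$ is the left endpoint of $I_1^{(j)}$; here $S(n_j,H)$ is exactly the incomplete Kloosterman sum of the introduction with $\ell=a$, and the constraint $x\not\= 0$ is automatic as $I_1^{(j)}\subseteq(0,p)$. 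The term $a=0$ contributes $\#(I_1^{(j)}\cap\ZZ)\,\#(I_2^{(j)}\cap\ZZ)/p\gg HK/p$, so summing over $j$ produces a main term of order $HKJ/p$.

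The contribution of $a\neq0$ is an error term, and the goal is to show it is dominated by $HKJ/p$. The elementary geometric-sum bound $|c_a^{(j)}|\ll\min(K,\,p/\min\{a,p-a\})$ is uniform in $j$, and $\sum_{a=1}^{p-1}\min(K,p/\min\{a,p-a\})\ll p\log p$. Inserting this and then applying Cauchy--Schwarz in the variable $j$, I would bound the total error by
\[
\frac1p\sum_{a=1}^{p-1}|c_a^{(j)}|\sum_{j=1}^J|S(n_j,H)|\ \ll\ \log p\cdot J^{1/2}\,\Big(\sup_{a\not\= 0}\sum_{j=1}^J|S(n_j,H)|^2\Big)^{1/2}.
\]
Everything now rests on a good mean value estimate for the inner sum over the \emph{disjoint} intervals $I_1^{(j)}$. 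The naive bound obtained by enlarging to all shifts, namely $\sum_{j}|S(n_j,H)|^2\le\sum_{n}|S(n,H)|^2\ll Hp+H^2p^{1/2}$, is fatally dependent on $H$ and suffices only when $H$ is tiny; the whole point of the argument is to do better by exploiting disjointness.

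The decisive input is the mean value theorem for incomplete Kloosterman sums advertised in the abstract, which I expect to take the shape
\[
\sum_{j=1}^J|S(n_j,H)|^2\ll p\log^2 p\qquad(a\not\= 0\bmod p),
\]
uniformly over any family of pairwise disjoint length-$H$ subintervals of $(0,p)$. The heuristic is that square-root cancellation makes each $|S(n_j,H)|^2$ of size $\asymp H$, while disjointness caps the number of intervals at $J\le p/H$, so the sum saturates at $\asymp p$; crucially this bound is independent of $J$, and that independence is exactly what lets a large $J$ help. To prove it I would open the square, collect the off-diagonal terms according to the difference $r=x-x'$ (using $\overline x-\overline{x'}=-r\,\overline{x(x-r)}$), and for each fixed $r$ complete the resulting incomplete sum over the relevant union of subintervals, invoking Weil's bound for the complete Kloosterman sums that arise. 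Controlling the Fourier coefficients of the indicator of this union of up to $J$ intervals is the delicate point, and I expect it to be the main obstacle of the entire argument: this is where the disjointness hypothesis is genuinely used and where the logarithmic factors are spent.

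Granting the mean value theorem, the error term is $\ll J^{1/2}p^{1/2}\log^2 p$. Requiring this to be dominated by the main term $HKJ/p$ gives $p^{3/2}\log^2 p\ll HK\,J^{1/2}$, that is, $J\gg p^3\log^4 p/(H^2K^2)$, at which point $\sum_{j=1}^J N_j>0$ and the proof is complete.
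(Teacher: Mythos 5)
Your argument for Theorem \ref{thm:inverses1} is exactly the paper's: expand the indicator of $I_2^{(j)}$ in additive characters, extract the main term $\gg JHK/p$ from $a=0$, bound the Fourier coefficients of $I_2^{(j)}$ by $\min(K,p/|a|)$ to get a factor $\log p$, apply Cauchy--Schwarz in $j$, and invoke the mean value theorem for incomplete Kloosterman sums over disjoint intervals (the paper's Theorem \ref{thm:mvt1}, whose shape and strength $\ll p\log^2 p$ you guessed correctly), arriving at the same threshold $J\gg p^3\log^4 p/(H^2K^2)$. The only divergence is in how you would establish that mean value input: the paper does not open the square over the union of the $I_1^{(j)}$ and wrestle with the Fourier coefficients of that union, but instead first proves the complete average $\sum_{n=1}^p|S(n,H)|^2\le H^2/p+8pH$ (by orthogonality and Weil) and then passes to disjoint intervals via Heath-Brown's dyadic-subdivision device from his Burgess-bound paper, which is where the $\log^2 H$ is spent --- so the ``delicate point'' you flag as the main obstacle is sidestepped entirely.
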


If we take $J=1$ in the theorem then we retrieve the above result that \eqref{eqn:intro.1} is soluble when
$HK\gg p^{3/2}\log^2 p$. Alternatively, if we allow a larger value of $J$, then we can get closer to what would follow on
Hooley's hypothesised bound for $S(n,H)$.

\begin{corollary}
With notation as in Theorem \ref{thm:inverses1}, suppose that $J\gg p^{1/3}$.
Then
there exists $j\in \{1,\ldots,J\}$ for which \eqref{eqn:intro.1} has a solution provided that
$H>p^{2/3}$ and $K>p^{2/3}(\log p)^2$.
\end{corollary}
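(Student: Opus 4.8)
The plan is to deduce this immediately from Theorem~\ref{thm:inverses1}: since the conclusion (solubility of \eqref{eqn:intro.1} for some $j$) is identical in both statements, all that is required is to check that the hypotheses $H>p^{2/3}$ and $K>p^{2/3}(\log p)^2$, together with the assumption $J\gg p^{1/3}$, suffice to guarantee the single inequality $J\gg p^3\log^4 p/(H^2K^2)$ demanded by the theorem.

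First I would bound the threshold $p^3\log^4 p/(H^2K^2)$ from above using the lower bounds on $H$ and $K$. Squaring gives $H^2>p^{4/3}$ and $K^2>p^{4/3}(\log p)^4$, whence
\[
H^2K^2 > p^{8/3}(\log p)^4.
\]
Substituting this into the threshold yields
\[
\frac{p^3\log^4 p}{H^2K^2} < \frac{p^3(\log p)^4}{p^{8/3}(\log p)^4}=p^{1/3},
\]
the crucial feature being that the factor $(\log p)^4$ cancels exactly. This cancellation is precisely the reason the corollary's thresholds for $H$ and $K$ are chosen with the displayed powers of $\log p$, and it is really the only ``content'' in the argument.

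It then remains to reconcile the implied constants, which is the one point to handle with care rather than a genuine obstacle. The hypothesis $J\gg p^{1/3}$ means $J\ge C'p^{1/3}$ for a constant $C'$ at our disposal; choosing $C'$ to be the implied constant $C$ of Theorem~\ref{thm:inverses1}, the chain $J\ge Cp^{1/3}>Cp^3\log^4 p/(H^2K^2)$ verifies the theorem's hypothesis, and its conclusion delivers the desired $j$. Since all the inequalities on $H$ and $K$ are strict, the bound $p^3\log^4 p/(H^2K^2)<p^{1/3}$ is strict, so the constant bookkeeping introduces no difficulty.
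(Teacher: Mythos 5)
Your deduction is correct and is exactly the intended one: the paper offers no separate proof of the corollary because it follows immediately from Theorem \ref{thm:inverses1} by the substitution $H^2K^2>p^{8/3}(\log p)^4$, which makes the threshold $p^3\log^4p/(H^2K^2)$ drop below $p^{1/3}$. Your handling of the implied constants is also fine, so there is nothing to add.
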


Our proof of Theorem \ref{thm:inverses1}  relies upon a mean value estimate for incomplete Kloosterman sums. These types of estimates have been studied extensively for multiplicative characters, especially in connection with variants of Burgess's bounds (see Heath-Brown \cite{H-B2012} and the discussion therein).
The situation for Kloosterman sums is relatively under-developed (see Friedlander and Iwaniec \cite{FI}, for example). The result we present here appears to be new, although many of our techniques are borrowed directly from the treatment of the analogous multiplicative problem \cite[Theorem 2]{H-B2012}. The deepest part of our argument is an appeal to Weil's bound for Kloosterman sums.  We will prove the following result in the next section.

\begin{theorem}\label{thm:mvt1}
If $I_1,\ldots ,I_J\subseteq (0,p)$ are disjoint subintervals,  with $H/2<|I_j|\leq H$ for each $j$, then for any $\ell\in (\ZZ/p\ZZ)^*$,  we have
\[
\sum_{j=1}^J\left|\sum_{n\in I_j}e\left(\frac{\ell\overline{n}}{p}\right)\right|^2\leq 2^{12} p\log^2H.
\]
\end{theorem}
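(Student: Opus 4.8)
The plan is to pass, by completion of the sum, from the incomplete sums to complete Kloosterman sums, to which Weil's bound applies, and then to exploit the disjointness of the $I_j$ through a large sieve inequality so that the final bound is independent of $J$. Writing $f_j=\mathbf{1}_{I_j}$, Fourier inversion on $\ZZ/p\ZZ$ gives
\[
T_j:=\sum_{n\in I_j}e\!\left(\frac{\ell\overline n}{p}\right)=\frac1p\sum_{|h|\le p/2}\hat f_j(h)\,K(h,\ell),\qquad \hat f_j(h)=\sum_{n\in I_j}e\!\left(-\frac{hn}{p}\right),
\]
where $K(h,\ell)=\sum_{n}e((hn+\ell\overline n)/p)$ is a Kloosterman sum and $K(0,\ell)=-1$. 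Here the deep input is Weil's bound $|K(h,\ell)|\le2\sqrt p$, valid for every $h$ because $p\nmid\ell$. The point to stress is that the target $2^{12}p\log^2H$ does not involve $J$, whereas inserting $|K(h,\ell)|\le2\sqrt p$ and summing the resulting $|\hat f_j|$ over $j$ by the triangle inequality loses a factor $J\asymp p/H$; the cancellation recovering this factor must come from the spacing of the intervals.

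First I would record the two features of $\hat f_j$ that drive the estimate: the uniform decay $|\hat f_j(h)|\ll\min(H,\,p/|h|)$, and the fact that the left endpoints $a_j$ of the $I_j$ are more than $H/2$-separated modulo $p$ (since the intervals are disjoint with $|I_j|>H/2$). I would then split the frequency range dyadically, setting $R_0=\{|h|\le p/H\}$ and $R_k=\{2^{k-1}p/H<|h|\le 2^{k}p/H\}$ for $1\le k\le k_0$ with $k_0\ll\log H$, so that $|R_k|\ll 2^kp/H$ and $|\hat f_j(h)|\ll H/2^k$ on $R_k$. On each $R_k$, for intervals of a common length $\ell$ the completed coefficients $b_h=K(h,\ell)\sum_{r=1}^{\ell}e(-hr/p)$ are independent of $j$, so $\sum_{h\in R_k}\hat f_j(h)K(h,\ell)=\sum_{h\in R_k}e(-ha_j/p)b_h$ and the large sieve inequality for the $(>H/2)$-spaced points $a_j/p$ yields
\[
\sum_{j}\Bigl|\sum_{h\in R_k}\hat f_j(h)K(h,\ell)\Bigr|^2\ll\Bigl(|R_k|+\tfrac{p}{H}\Bigr)\sum_{h\in R_k}|b_h|^2\ll\frac{2^kp}{H}\cdot p\cdot|R_k|\Bigl(\frac{H}{2^k}\Bigr)^2\ll p^3,
\]
using Weil once more in the form $|K(h,\ell)|^2\le4p$. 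Summing the $\ll\log H$ ranges after a Cauchy--Schwarz in $k$ (which supplies the second logarithm) gives $\sum_j|T_j|^2\ll p^{-2}(\log H)^2\cdot p^3=p\log^2H$, with the absolute constant comfortably below $2^{12}$.

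The one genuine complication is that the lengths $|I_j|$ are only constrained to lie in $(H/2,H]$, so $\hat f_j(h)=e(-ha_j/p)\sum_{r=1}^{|I_j|}e(-hr/p)$ depends on $j$ not only through the phase $e(-ha_j/p)$ but also through the length-dependent factor, whereas the large sieve in the form above needs the $h$-coefficients to be independent of $j$. I would remove this by writing the inner sum over $r$ as a short sum over the length parameter of equal-length templates (equivalently, by partial summation in the length), noting that the relevant endpoint sets remain more than $H/2$-separated; this reduces matters to finitely many applications of the equal-length estimate with no loss in the spacing.

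I expect the main obstacle to be exactly this reconciliation of the two mechanisms: the frequency cap $|\hat f_j(h)|\ll H$, which tames the low frequencies and is the source of the clean $\log^2H$, comes from the length being $\le H$, whereas the large sieve saving---that the effective number of frequencies in $R_k$ is $\asymp 2^kp/H$ rather than $p$---requires coefficients uniform in $j$. Securing the large-sieve constant $\asymp 2^kp/H$ uniformly over the varying lengths, so that the otherwise fatal factor $J\asymp p/H$ is absorbed, is where the care lies; both Weil's pointwise bound $|K(h,\ell)|\le 2\sqrt p$ and the $(>H/2)$-spacing of the intervals are indispensable, and neither alone suffices.
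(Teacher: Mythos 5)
Your route -- completion of the sum, Weil's bound for the complete Kloosterman sums, and the large sieve over the $>H/2$-spaced left endpoints, with a dyadic decomposition in the frequency $h$ -- is genuinely different from the paper's. The paper instead proves a complete mean value $\sum_{n=1}^p|S(n,H)|^2\le H^2/p+8pH$ by orthogonality and Weil, and then reduces the sum over the $J$ given intervals to this complete average by a shifting argument together with a binary decomposition of the interval \emph{lengths}. For intervals of a single common length your computation is correct and does give $\ll p\log^2H$ with a reasonable constant; in that setting your argument is arguably the more transparent of the two.

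The gap is exactly the point you flag and then dismiss: the lengths $|I_j|$ vary in $(H/2,H]$. This is not a technicality to be absorbed into ``finitely many applications of the equal-length estimate'' -- it is where the paper's proof does essentially all of its work (the $\max_{1\le h\le H}|S(N_j,h)|$, the averaging over shifted starting points, and the binary decomposition exist only to handle it). Your large sieve step needs $\hat f_j(h)=e(-ha_j/p)\,b_h$ with $b_h$ independent of $j$; once $g_h(L)=\sum_{r\le L}e(-hr/p)$ is evaluated at $L=L_j$, the off-diagonal terms $\sum_h e(-h(a_j-a_{j'})/p)\,g_h(L_j)\overline{g_h(L_{j'})}$ are not controlled by any standard form of the large sieve. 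None of the rescues you hint at clearly works: grouping by exact length creates up to $H/2$ classes, which is fatal; comparing $S(a_j,L_j)$ with an average of $S(a_j,L)$ over $L$ (``partial summation in the length'') produces error terms $S(a_j+L,L_j-L)$ that are again incomplete Kloosterman sums of length up to $H/2$, so the reduction is circular; and decomposing each $L_j$ in binary keeps the relevant starting points only \emph{almost} $H/2$-spaced (adjacent intervals can produce starting points at distance $O(1)$ at finer scales, so your claim of ``no loss in the spacing'' fails as stated, though an odd/even split repairs it), and, more seriously, recombining the $\asymp\log H$ length scales by Cauchy--Schwarz on top of the $\asymp\log H$ frequency ranges yields $\ll p\log^4H$, not the stated $2^{12}p\log^2H$. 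To complete your approach you would need either a large sieve tolerant of the $j$-dependent amplitudes $g_h(L_j)$, or the paper's device of passing to a complete average over all $p$ starting points, where positivity replaces spacing and the maximum over sub-lengths is paid for only once.
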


Taking $J=1$ shows that, up to a constant factor,  this result includes as a special case the bound \eqref{eq:weil} for incomplete Kloosterman sums.

\begin{TA}
While working on this paper TDB was supported by
EPSRC grant  \texttt{EP/E053262/1} and
AH was supported by EPSRC grant \texttt{EP/J00149X/1}.
We are grateful to  Professor Heath-Brown for several useful discussions.
\end{TA}

\section{Proof of Theorem \ref{thm:mvt1}}

Our starting point is the following mean value theorem for $S(n,H)$.

\begin{lemma}\label{lem:mvt1}
For $H\in\N$ and $\ell\in (\ZZ/p\ZZ)^*$, we have
$$
\sum_{n=1}^p\left|S(n,H)\right|^2\leq \frac{H^2}{p}+8pH.
$$
\end{lemma}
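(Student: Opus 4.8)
The plan is to use the \emph{completion method}: express the incomplete sum $S(n,H)$ through \emph{complete} Kloosterman sums by means of additive characters, and then feed in Weil's bound. This is the Kloosterman analogue of the Plancherel treatment of incomplete character sums, and it is what converts Weil's pointwise estimate into a mean value bound.

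First I would record the elementary identity $S(n,H)=\sum_{j=1}^{H}c(n+j)$, where $c$ is the $p$-periodic function with $c(m)=e(\ell\bar m/p)$ for $p\nmid m$ and $c(m)=0$ otherwise. Since the summand of $\sum_{n=1}^p|S(n,H)|^2$ is $p$-periodic in $n$, I would run $n$ over a complete residue system and view $S(n,H)$ as the correlation of $c$ against the indicator of $\{1,\dots,H\}$. Expanding the square and applying orthogonality in $n$ (equivalently, Plancherel for the correlation) collapses everything to
\[
\sum_{n=1}^p|S(n,H)|^2=\frac{1}{p}\sum_{t=0}^{p-1}|\widehat{c}(t)|^2\,|G(t)|^2 ,
\]
where $G(t)=\sum_{j=1}^{H}e(-jt/p)$ is a geometric sum and $\widehat{c}(t)=\sum_{m=1}^{p-1}e\big((\ell\bar m-tm)/p\big)$ is the complete Kloosterman sum $K(-t,\ell;p)$, with $K(a,b;p)=\sum_{m=1}^{p-1}e((am+b\bar m)/p)$.

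Second, I would isolate the frequency $t=0$. There $\widehat{c}(0)=\sum_{m=1}^{p-1}e(\ell\bar m/p)=\sum_{m=1}^{p-1}e(\ell m/p)=-1$ and $G(0)=H$, so this term contributes exactly $H^2/p$, the first term of the Lemma. For every $t$ with $1\le t\le p-1$ the key input is Weil's bound $|K(-t,\ell;p)|\le 2\sqrt{p}$, which holds for \emph{all} such $t$ precisely because $p\nmid\ell$; hence $|\widehat{c}(t)|^2\le 4p$. The remaining frequencies therefore contribute at most $\tfrac{4p}{p}\sum_{t=1}^{p-1}|G(t)|^2=4\sum_{t=1}^{p-1}|G(t)|^2$, and a Plancherel estimate for the interval controls this: writing $G_0(r)=\#\{1\le j\le H: r\mid (j-r)\ \text{i.e.}\ j\equiv r\}$ one has $\sum_{t=0}^{p-1}|G(t)|^2=p\sum_{r}G_0(r)^2$, and subtracting the $t=0$ value $H^2$ gives $\sum_{t=1}^{p-1}|G(t)|^2=s(p-s)\le pH$, where $s\equiv H$ with $0\le s<p$. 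Adding the two pieces yields $H^2/p+4pH$, comfortably inside the claimed $H^2/p+8pH$.

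The main obstacle is conceptual rather than computational: one must recognise that completing the interval turns the inner sum into a genuine Kloosterman sum, so that Weil's estimate applies \emph{uniformly} over all $t\neq 0$. It is essential to peel off the frequency $t=0$ separately, since bounding it by Weil would cost $4p\cdot H^2/p=4H^2$, whereas its true size is only $H^2/p$. A secondary technical point is to make the geometric-sum bound $\sum_{t\neq 0}|G(t)|^2\le pH$ uniform in $H$, including the range $H\ge p$ where the residues $1,\dots,H$ repeat; the clean identity $\sum_{t\neq 0}|G(t)|^2=s(p-s)$ shows the estimate persists there. I note finally that a naive expansion into shifted autocorrelations $R(k)=\sum_m c(m+k)\bar{c(m)}$ followed by a termwise use of Weil would only give $O(\sqrt{p}\,H^2)$, which is far too weak; it is the Fourier side, where the nonnegative Fejér weights $H-|k|$ become the squares $|G(t)|^2\ge 0$, that makes Weil's bound usable frequency by frequency.
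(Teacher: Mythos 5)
Your proof is correct and is essentially the same as the paper's: both arguments reduce $\sum_{n=1}^p|S(n,H)|^2$ to the identity $\frac{1}{p}\sum_{a\bmod p}|K(\ell,a;p)|^2\,|G(a)|^2$ (you via Parseval, the paper via squaring out and orthogonality, which is the same computation), isolate the zero frequency to get the exact term $H^2/p$, and apply Weil's bound uniformly to the remaining frequencies. The only difference is at the final step, where your exact evaluation $\sum_{a\neq 0}|G(a)|^2=s(p-s)\le pH$ is marginally sharper than the paper's $\min\{H,p/(2a)\}^2$ estimate and yields $4pH$ in place of $8pH$.
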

\begin{proof}
After squaring out the inner sum and interchanging the order of summation, the left hand side becomes
$$
\sum_{h_1,h_2=1}^H\sum_{\substack{n=1\\n\not= -h_1,-h_2\bmod p}}^pe\left(\frac{\ell(\overline{n+h_1}-\overline{n+h_2})}{p}\right).
$$
Using orthogonality of characters it is easy to see that the inner sum over $n$ is
\begin{align*}
&=\sum_{\substack{x,y\in (\ZZ/p\ZZ)^*\\ \overline{x}-\overline{y}\=h_1-h_2\bmod p}}
e\left(\frac{\ell(x-y)}{p}\right)\\
&=
\frac{1}{p}
\sum_{\substack{x,y\in (\ZZ/p\ZZ)^*}}
e\left(\frac{\ell(x-y)}{p}\right)
\sum_{a=1}^p
e\left(\frac{a(  \overline{x}-\overline{y} )+a(h_2-h_1)}{p}\right).
\end{align*}
Hence
\begin{align*}
\sum_{n=1}^p\left|S(n,H)\right|^2
&=
\frac{1}{p}
\sum_{a=1}^p
\sum_{h_1,h_2=1}^H
e\left(\frac{a(h_2-h_1)}{p}\right)
\sum_{\substack{x,y\in (\ZZ/p\ZZ)^*}}
e\left(\frac{\ell(x-y)+a(  \overline{x}-\overline{y} )}{p}\right)\\
&=
\frac{1}{p}
\sum_{a=1}^p
|K(\ell,a;p)|^2
\sum_{h_1,h_2=1}^H
e\left(\frac{a(h_2-h_1)}{p}\right),
\end{align*}
where $K(\ell,a;p)$ is the usual complete Kloosterman sum. The contribution from $a=p$ is
$$
\frac{|K(\ell,0;p)|^2 H^2}{p}=\frac{H^2}{p},
$$
since $p\nmid \ell$. The remaining contribution has modulus
\begin{align*}
\left|\frac{1}{p}
\sum_{a=1}^{p-1}
|K(\ell,a;p)|^2
\sum_{h_1,h_2=1}^H
e\left(\frac{a(h_2-h_1)}{p}\right)\right|
&\leq
4
\sum_{0<|a|\leq p/2}
\left|\sum_{h_1,h_2=1}^H
e\left(\frac{a(h_2-h_1)}{p}\right)\right|\\
&\leq
8
\sum_{0<a\leq p/2}
\min\left\{H, \frac{p}{2a}\right\}^2\\
&\leq 8 pH,
\end{align*}
by the Weil bound for the Kloosterman sum and the familiar estimate for a geometric series.
Combining these contributions, we therefore arrive at the statement of the lemma.
\end{proof}

The rest of the proof of Theorem \ref{thm:mvt1} is taken from the proof of \cite[Theorem 2]{H-B2012}, and we include
it only for completeness. We may assume that $H\geq 4$ in what follows since the result is trivial otherwise. Write $N_j$ for the smallest integer in $I_j$ and suppose that $N_1<\cdots <N_J$. By separately considering the odd and then the even numbered intervals we may assume without loss of generality that $N_{j+1}-N_j\ge H$ for $1\le j <J$.

The starting point is the observation that
\begin{align}\label{eqn:pf:thm:mvt1.1}
\sum_{j=1}^J\left|\sum_{n\in I_j}e\left(\frac{\ell\overline{n}}{p}\right)\right|^2\le\sum_{j=1}^J\max_{1\le h\le H}|S(N_j,h)|^2.
\end{align}
For any $1\le h\le H$ and $N_j-H<n\le N_j$ we have that
\[|S(N_j,h)|=|S(n,N_j-n+h)-S(n,N_j-n)|\le 2\max_{1\le k\le 2H}|S(n,k)|,\]
whence
\begin{align*}
|S(N_j,h)|\le\frac{2}{H}\sum_{N_j-H<n\le N_j}\max_{1\le k\le 2H}|S(n,k)|.
\end{align*}
Cauchy's inequality yields
\[|S(N_j,h)|^2\le\frac{4}{H}\sum_{N_j-H<n\le N_j}\max_{1\le k\le 2H}|S(n,k)|^2.\]
Taking the max over $h$ and then summing over $j$ now gives
\begin{equation}
\label{eqn:pf:thm:mvt1.2}
\begin{split}
\sum_{j=1}^J\max_{1\le h\le H}|S(N_j,h)|^2&\le\frac{4}{H}\sum_{j=1}^J\sum_{N_j-H<n\le N_j}\max_{1\le k\le 2H}|S(n,k)|^2\\
&\le\frac{4}{H}\sum_{n=1}^p\max_{1\le k\le 2H}|S(n,k)|^2,
\end{split}\end{equation}
the last inequality coming from our spacing assumption. We now seek an upper bound for the sum on the right hand side.

Let $t$ be the smallest positive integer with $2H\le 2^t$, so that in particular $2H\leq 2^t\leq 4H$ and
$t+1\leq 4\log H$. For each $1\le n\le p$ we choose a positive integer $k=k(n)\le 2H$, with
\[\max_{1\le h\le 2H}|S(n,h)|=|S(n,k)|.\]
By writing
$k=\sum_{d\in\D}2^{t-d},$
where $\D$ is a collection of integers in $[0,t]$, we have that
\[S(n,k)=\sum_{d\in\D}S(n+v_{n,d}2^{t-d},2^{t-d}),\]
where
\[v_{n,d}=\sum_{\substack{e\in\D\\ e<d}}2^{d-e}< 2^d.\]
Then by Cauchy's inequality we deduce that
\begin{align*}
\max_{1\le h\le 2H}|S(n,h)|^2&\le |\D|\sum_{d\in\D}|S(n+v_{n,d}2^{t-d},2^{t-d})|^2\\
&\le (t+1)\sum_{0\le d\le t}\sum_{0\le v<2^d}|S(n+v2^{t-d},2^{t-d})|^2.
\end{align*}
Now summing both sides over $n$ and applying Lemma \ref{lem:mvt1} we have that
\begin{align*}
\sum_{n=1}^p\max_{1\le k\le 2H}|S(n,k)|^2
&\leq
(t+1) \sum_{0\le d\le t}\sum_{0\le v<2^d} \left(\frac{2^{2t-2d}}{p}+8p2^{t-d}\right)\\
&\leq
(t+1)^2 \left(\frac{2^{2t}}{p}+8p2^{t}\right)\\
&\leq 2^{8}
 \left(\frac{H^2}{p}+2pH\right)\log^2H.
 \end{align*}
Since $H^2/p\leq 2pH$ we
easily complete the proof of Theorem \ref{thm:mvt1} by combining this  with (\ref{eqn:pf:thm:mvt1.1}) and (\ref{eqn:pf:thm:mvt1.2}).

\section{Proof of Theorem \ref{thm:inverses1}}

Now we proceed to the proof of our main theorem. For each $j$ the number of solutions to
\eqref{eqn:intro.1} is equal to
\begin{align*}
\sum_{x\in I_1^{(j)},~\overline{x}\in I_2^{(j)}}1
&=\sum_{x\in I_1^{(j)}}\sum_{y\in I_2^{(j)}}\frac{1}{p}\sum_{\ell=1}^pe\left(\frac{\ell(\overline{x}-y)}{p}\right)\\
&=\sum_{x\in I_1^{(j)}}\sum_{y\in I_2^{(j)}}\frac{1}{p}+\sum_{x\in I_1^{(j)}}\sum_{y\in I_2^{(j)}}\frac{1}{p}\sum_{\ell=1}^{p-1}e\left(\frac{\ell(\overline{x}-y)}{p}\right)=S_{1,j}+S_{2,j},
\end{align*}
say.
The total contribution from the $S_{1,j}$ terms is
\begin{equation}
\sum_{j=1}^JS_{1,j}\gg \frac{JHK}{p}.\label{eqn:pf:thm:inverses1.1}
\end{equation}
Next, the standard estimate for a geometric series gives
\begin{align*}
S_{2,j}
&=\frac{1}{p}\sum_{0<|\ell|\leq p/2}
\left(\sum_{y\in I_2^{(j)}}e\left(\frac{-\ell y}{p}\right)\right)\left(\sum_{x\in I_1^{(j)}}e\left(\frac{\ell\overline{x}}{p}\right)\right)\\
&\ll
\sum_{0<|\ell|\leq p/2}
\frac{1}{|\ell|}\left|\sum_{x\in I_1^{(j)}}e\left(\frac{\ell\overline{x}}{p}\right)\right|.
\end{align*}
Applying Cauchy's inequality and Theorem \ref{thm:mvt1} we deduce that
\begin{align*}
\sum_{j=1}^J|S_{2,j}|
&\ll
\sum_{0<|\ell|\leq p/2}
\frac{1}{|\ell|}\sum_{j=1}^J\left|\sum_{x\in I_1^{(j)}}e\left(\frac{\ell\overline{x}}{p}\right)\right|\nonumber\\
&\ll J^{1/2}\sum_{0<|\ell|\leq p/2}
\frac{1}{|\ell|}\left(\sum_{j=1}^J\left|\sum_{x\in I_1^{(j)}}e\left(\frac{\ell\overline{x}}{p}\right)\right|^2\right)^{1/2}\nonumber\\
&\ll J^{1/2}(\log p)\left(p\log^2 H\right)^{1/2}.
\end{align*}
Under the conditions of Theorem \ref{thm:inverses1}, it now follows that (\ref{eqn:pf:thm:inverses1.1}) dominates this quantity, from which the conclusion of the theorem follows.

\end{document}